\def\lf{\left}
\def\ri{\right}
\def\a{{\alpha}}
\def\wt{\widetilde}
\def\K{K\"ahler }
\def\A{Amp\`{e}re }
\def\be{\begin{equation}}
\def\ee{\end{equation}}
\def\lf{\left}
\def\ri{\right}
\def\a{{\alpha}}
\def\wt{\widetilde}
\def\wt{\widetilde}
\long\def\symbolfootnote[#1]#2{\begingroup%
\def\thefootnote{\fnsymbol{footnote}}\footnote[#1]{#2}\endgroup}
\newtheorem{thm}{Theorem}
\newtheorem{cor}{Corollary}
\theoremstyle{definition}
\theoremstyle{remark}
\newtheorem{rem}{Remark}
\begin{document}
\title[ A note on harmonic forms and the K\"ahler cone]
{ A note on harmonic forms and the boundary of the K\"ahler cone}

\author{Albert Chau$^1$}

\address{Department of Mathematics,
The University of British Columbia, Room 121, 1984 Mathematics
Road, Vancouver, B.C., Canada V6T 1Z2} \email{chau@math.ubc.ca}

\author{Luen-Fai Tam$^2$}
\address{The Institute of Mathematical Sciences and Department of
 Mathematics, The Chinese University of Hong Kong,
Shatin, Hong Kong, China.} \email{lftam@math.cuhk.edu.hk}
\thanks{$^1$Research
partially supported by NSERC grant no. \#327637-11}
\thanks{$^2$Research partially supported by Hong Kong RGC General Research Fund
\#CUHK 403108}

\begin{abstract}
Motivated by the results of Wu-Yau-Zheng \cite{WuYauZheng}, we show that under a certain curvature assumption the harmonic representative of any boundary class of the \K cone is nonnegative.
\end{abstract}

\maketitle

 The purpose of this note is to prove the following:
\begin{thm}\label{t1}
Let $(M^n, g)$ be a compact complex n-dimensional \K manifold satisfying the following curvature condition: for any $x\in M$, unitary frame $\{e_1,\dots,e_n\}$ of $T^{(1,0)}_x(M)$ and any real numbers $\xi_1,...,\xi_n$ we have
\begin{equation}\label{e1}
\sum_{i,j=1}^n R_{i\bar{i}j\bar{j}} (\xi_i -\xi_j)^2 \geq 0.
\end{equation}
Let $\alpha$ be in the closure of the \K cone of $M$ and $\eta$ be the unique harmonic representative in $\alpha$.  Then $\eta$ is nonnegative.  Moreover, $\eta$ is positive if and only if $\alpha^n[M] >0$.
\end{thm}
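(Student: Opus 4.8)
The plan is to deduce everything from a single structural fact: under hypothesis \eqref{e1}, every harmonic $(1,1)$-form on $M$ is parallel. Since $g$ is \K and $\alpha$ is a real $(1,1)$-class, its harmonic representative $\eta=\sqrt{-1}\,\eta_{i\bar j}\,dz^i\wedge d\bar z^j$ is a real harmonic $(1,1)$-form with $\eta_{i\bar j}$ Hermitian. I would start from the Bochner--Weitzenb\"ock identity for harmonic $(1,1)$-forms, $\nabla^*\nabla\eta+\mathcal R(\eta)=0$, where $\mathcal R$ is the Weitzenb\"ock curvature endomorphism; pairing with $\eta$ and integrating gives
\be
\int_M|\nabla\eta|^2=-\int_M\langle\mathcal R(\eta),\eta\rangle .
\ee
The crucial computation is the pointwise evaluation of $\langle\mathcal R(\eta),\eta\rangle$. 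Fixing $x\in M$ and a unitary frame $\{e_i\}$ diagonalizing $\eta$, so that $\eta_{i\bar j}=\lambda_i\delta_{ij}$, a direct calculation using the \K symmetries of the curvature tensor should yield
\be
\langle\mathcal R(\eta),\eta\rangle=\tfrac12\sum_{i,j=1}^n R_{i\bar i j\bar j}(\lambda_i-\lambda_j)^2 .
\ee
This is exactly the quantity in \eqref{e1} with $\xi_i=\lambda_i$, so the hypothesis forces $\langle\mathcal R(\eta),\eta\rangle\ge 0$ at every point. Comparing with the identity above gives $\int_M|\nabla\eta|^2\le 0$, hence $\nabla\eta\equiv 0$.

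Being parallel, the Hermitian endomorphism $g^{-1}\eta$ has eigenvalues $\lambda_1\le\cdots\le\lambda_n$ that are \emph{constant} on $M$, and at each point one may choose a unitary coframe $\{\theta^i\}$ simultaneously diagonalizing $g$ and $\eta$, so that $\omega_g=\sqrt{-1}\sum_i\theta^i\wedge\bar\theta^i$ and $\eta=\sqrt{-1}\sum_i\lambda_i\,\theta^i\wedge\bar\theta^i$. A pointwise wedge computation then gives, for each $0\le k\le n$,
\be
\eta^k\wedge\omega_g^{n-k}=\binom{n}{k}^{-1}e_k(\lambda)\,\omega_g^n ,
\ee
where $e_k$ is the $k$-th elementary symmetric polynomial. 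Since the $\lambda_i$ are constant, integrating yields $\alpha^k\cdot[\omega_g]^{n-k}=\binom{n}{k}^{-1}e_k(\lambda)\int_M\omega_g^n$. Because $\alpha$ lies in the closure of the \K cone, each $\alpha_t:=\alpha+t[\omega_g]$ is a \K class for $t>0$, so $\alpha_t^k\cdot[\omega_g]^{n-k}>0$; letting $t\to 0$ gives $\alpha^k\cdot[\omega_g]^{n-k}\ge 0$, hence $e_k(\lambda)\ge 0$ for all $k$.

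It then remains to invoke the elementary fact that if all $e_1(\lambda),\dots,e_n(\lambda)$ of a tuple of real numbers are nonnegative, then each $\lambda_i\ge 0$: indeed $\prod_i(t+\lambda_i)=\sum_k e_k(\lambda)\,t^{n-k}>0$ for every $t>0$, so no $-\lambda_i$ can be a positive root. Thus all $\lambda_i\ge 0$, i.e. $\eta\ge 0$. For the last statement, $\alpha^n[M]=e_n(\lambda)\int_M\omega_g^n=\big(\prod_i\lambda_i\big)\int_M\omega_g^n$, which—given $\lambda_i\ge 0$—is positive precisely when every $\lambda_i>0$, that is, exactly when $\eta>0$.

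The heart of the argument, and the step I expect to demand the most care, is the Weitzenb\"ock computation identifying $\langle\mathcal R(\eta),\eta\rangle$ with the expression in \eqref{e1}; the bookkeeping of the full curvature and Ricci contributions, and in particular fixing the sign convention so that \eqref{e1} is the correctly-signed hypothesis, is where errors could creep in. Everything downstream—constancy of the eigenvalues of a parallel form, the intersection-number computation, and the symmetric-function lemma—is routine.
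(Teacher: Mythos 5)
Your proof is correct, and its first half coincides with the paper's self-contained (second) proof: both use the Bochner--Weitzenb\"ock formula together with \eqref{e1} to conclude that the harmonic representative $\eta$ is parallel, hence has constant eigenvalues $\lambda_1,\dots,\lambda_n$ relative to $\omega_g$. (Like you, the paper does not carry out the curvature computation; it cites \cite{GK, HowardSmythWu1981} and notes that the classical argument for nonnegative bisectional curvature generalizes immediately to \eqref{e1}, so your asserted identity $\langle\mathcal{R}(\eta),\eta\rangle=\tfrac12\sum_{i,j}R_{i\bar{i}j\bar{j}}(\lambda_i-\lambda_j)^2$ is exactly that known ingredient, with consistent signs.) Where you genuinely diverge is the endgame. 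The paper works along the segment $(1-t)[\omega_g]+t\alpha$ for $t\in[0,1)$: writing $(1-t)\omega_g+t\eta=\omega_t+dd^cf_t$ with $\omega_t$ K\"ahler and integrating the top power gives $\mathrm{Vol}_g(M)\prod_{i}(1-t+t\lambda_i)>0$ for all $t\in[0,1)$, and a single negative eigenvalue $\lambda_k<0$ would make this product vanish at $t_0=1/(1-\lambda_k)\in(0,1)$ --- an immediate contradiction using only one intersection number per $t$ and no auxiliary lemma. You instead compute all mixed intersection numbers $\alpha^k\cdot[\omega_g]^{n-k}=\binom{n}{k}^{-1}e_k(\lambda)\int_M\omega_g^n$, show each is nonnegative by approximating $\alpha$ by the K\"ahler classes $\alpha+t[\omega_g]$ (which is legitimate: by the cone property, the paper's definition of the closure gives $\alpha+t[\omega_g]\in\mathcal{K}$ for all $t>0$), and then invoke the elementary-symmetric-function lemma, which you prove correctly. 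Both deductions rest on the same two pillars --- constancy of the eigenvalues and positivity of intersections of K\"ahler classes --- but the paper's is more economical, while yours buys the extra conclusion $\alpha^k\cdot[\omega_g]^{n-k}\ge0$ for every $k$ and stays entirely at the level of cohomology classes, never choosing K\"ahler representatives $\omega_t$ or potentials $f_t$. Your handling of the final equivalence ($\eta>0$ iff $\alpha^n[M]>0$) matches the paper's. Note also that the paper's first proof, via the Wu--Yau--Zheng solution of a degenerate Monge--Amp\`ere equation, is entirely different from your argument, so yours is a genuine alternative to that route as well.
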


Our motivation comes from the results of Wu, Yau and Zheng in \cite{WuYauZheng} where the authors studied a degenerate complex Monge \A equation to better understand the boundary of the K\"ahler cone under the curvature condition in \eqref{e1}.   We will give two proofs of Theorem \ref{t1}, and one purpose here is to point out that a rather straight forward observation made on the proof in \cite{WuYauZheng} leads to a proof of Theorem \ref{t1}.  After showing this, we then present another short self-contained  proof of Theorem \ref{t1}. \symbolfootnote[1]{After posting the first version of this note, it was pointed out to us that the nonnegativity of $\eta$ in Theorem \ref{t1} had been obtained by Zhang in \cite{Z}.}

 Before we begin, let us first recall some basic definitions and notation.  Given a complex manifold $M$,  recall that a real class $\alpha\in H^{(1,1)}(M)$ is called a \K class if $\alpha$ contains a smooth positive definite representative $\eta$.  The space of \K classes is a convex cone in $H^{(1,1)}(M)$  referred to as the \K cone which we denote by $\mathcal{K}$.  We say that $\alpha$ is in the closure of $\mathcal{K}$ if $[(1-t)\omega+t\eta] \in\mathcal{K}$ for any smooth $\eta\in\alpha$, $\omega\in\mathcal{K}$ and $t\in[0,1)$.  Finally, given any real $\alpha\in H^{(1,1)}(M)$ we use $\alpha^n[M]$ to denote the integral $\int_M  \alpha^n$.

The following is proved in  \cite{WuYauZheng}:
\begin{thm}\label{WYZ}[Wu-Yau-Zheng]
Let $(M^n, g)$ be a compact complex n-dimensional \K manifold satisfying the curvature condition in Theorem \ref{t1}.
Then any boundary class of the \K cone  can be represented by a $C^\infty$ closed $(1,1)$ form that is everywhere nonnegative.
\end{thm}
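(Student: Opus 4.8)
The statement follows immediately from Theorem \ref{t1}: if $\alpha$ lies in the closure of the \K cone and $\eta$ is its harmonic representative, then $\eta$ is automatically a $C^\infty$ closed $(1,1)$ form (harmonic forms on a compact manifold are smooth, and harmonicity forces $d\eta=0$), and Theorem \ref{t1} gives $\eta\ge 0$. The whole content therefore lies in the nonnegativity of the harmonic representative, and the plan is to establish this directly by a Bochner argument, keeping the proof independent of the degenerate complex Monge \A construction of \cite{WuYauZheng}; the latter reaches the same conclusion through a priori second-order estimates, again powered by \eqref{e1}, whereas the route below avoids PDE estimates entirely.

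First I would show $\eta$ is parallel. Write $\eta=\sqrt{-1}\,h_{i\bar\jmath}\,dz^i\wedge d\bar z^j$ in a local unitary frame and use that $\eta$ is closed and co-closed, which give $\nabla_{\bar l}h_{i\bar\jmath}=\nabla_{\bar\jmath}h_{i\bar l}$ together with the divergence-free condition $g^{k\bar l}\nabla_k h_{i\bar l}=0$. Commuting covariant derivatives against the K\"ahler curvature then produces a pointwise Bochner identity
\[
g^{k\bar l}\nabla_k\nabla_{\bar l}h_{i\bar\jmath}=\mathcal C_{i\bar\jmath},
\]
where $\mathcal C$ is a pure curvature contraction of $h$. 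Diagonalizing $h$ at a point, with eigenvalues $\lambda_1,\dots,\lambda_n$, the pairing with $h$ works out to
\[
\langle \mathcal C,h\rangle=\tfrac12\sum_{i,j=1}^n R_{i\bar{i}j\bar{j}}(\lambda_i-\lambda_j)^2,
\]
which is $\ge 0$ by the curvature hypothesis \eqref{e1}. On the other hand, integrating the Bochner identity against $h$ over $M$ and integrating by parts gives $\int_M\langle\mathcal C,h\rangle=-\int_M|\nabla^{(0,1)}\eta|^2\le 0$. Comparing the two expressions forces both to vanish, whence $\nabla^{(0,1)}\eta\equiv 0$ and, by reality of $\eta$, also $\nabla^{(1,0)}\eta\equiv 0$; thus $\eta$ is parallel.

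Next I would deduce nonnegativity. A parallel $\eta$ has constant eigenvalues $\lambda_i$, and in a unitary frame diagonalizing $\eta$ one has the pointwise identity $\eta^k\wedge\omega^{n-k}=c_k\,e_k(\lambda)\,\omega^n$ with $c_k=k!(n-k)!/n!>0$ and $e_k$ the $k$-th elementary symmetric polynomial in the $\lambda_i$. Since $\alpha$ lies in the closure of the \K cone, we may write $\alpha=\lim_{t\to 1^-}\beta_t$ with $\beta_t=(1-t)[\omega]+t\alpha$ a \K class, so each intersection number $\alpha^k\cdot[\omega]^{n-k}[M]$ is a limit of positive numbers and hence $\ge 0$; this forces $e_k(\lambda)\ge 0$ for all $k$. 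As $\prod_{i}(x+\lambda_i)=\sum_{k=0}^n e_k(\lambda)\,x^{n-k}$ is then strictly positive for every $x>0$, no $-\lambda_i$ is positive, i.e. $\lambda_i\ge 0$ for all $i$, so $\eta\ge 0$. Being harmonic, $\eta$ is then exactly the smooth closed everywhere-nonnegative representative required. (The same computation gives $\alpha^n[M]=(\prod_i\lambda_i)\int_M\omega^n$, which also recovers the positivity refinement of Theorem \ref{t1}.)

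The one delicate point is the sign in the Bochner step. Everything hinges on the curvature term coming out as $+\tfrac12\sum R_{i\bar{i}j\bar{j}}(\lambda_i-\lambda_j)^2$, so that it carries the \emph{opposite} sign to the integrated $-\int_M|\nabla^{(0,1)}\eta|^2$ and the two squeeze each other to zero; a sign slip would instead yield only the tautology $\int_M|\nabla^{(0,1)}\eta|^2=\tfrac12\int_M\sum R_{i\bar{i}j\bar{j}}(\lambda_i-\lambda_j)^2$ and no vanishing. I would therefore run the commutation computation in a fixed convention (say $R_{i\bar jk\bar l}=-\partial_i\partial_{\bar j}g_{k\bar l}+\cdots$), track the Ricci and curvature-operator terms so that $\mathcal C_{i\bar\imath}=\sum_j R_{i\bar{i}j\bar{j}}(\lambda_i-\lambda_j)$ in a diagonalizing frame, and sanity-check against the models $\eta=c\,\omega$ (where the curvature term must vanish) and the flat torus.
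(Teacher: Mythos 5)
Your proof is correct, but note how it sits relative to the paper. For Theorem \ref{WYZ} itself the paper follows the Monge--Amp\`ere route of Wu--Yau--Zheng: solve the nondegenerate equations \eqref{eq-WYZ2} by Yau's theorem, use \eqref{e1} to show $v_t=tv$, and let $t\to 1$ to obtain a nonnegative representative $\omega_0+\Phi+dd^cv$ solving \eqref{eq-WYZ}. You instead prove that the harmonic representative works, which is precisely the strategy of the paper's second, self-contained proof of the stronger Theorem \ref{t1}, of which Theorem \ref{WYZ} is an immediate corollary; in particular there is no circularity, since your argument never invokes the Monge--Amp\`ere construction (unlike the paper's first proof of Theorem \ref{t1}, which does). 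Your Bochner step is the Goldberg--Kobayashi/Howard--Smyth--Wu computation that the paper's Remark cites as extending verbatim to \eqref{e1}, and your signs are right: with the convention $R_{i\bar\jmath k\bar l}=-\p_i\p_{\bar\jmath}g_{k\bar l}+\cdots$ one gets $g^{k\bar l}\nabla_k\nabla_{\bar l}h_{i\bar\imath}=\sum_j R_{i\bar\imath j\bar\jmath}(\lambda_i-\lambda_j)$ in a diagonalizing frame, so the pairing $+\tfrac12\sum_{i,j}R_{i\bar\imath j\bar\jmath}(\lambda_i-\lambda_j)^2\ge 0$ is squeezed against $-\int_M|\nabla^{0,1}\eta|^2\le 0$, forcing parallelism. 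Your eigenvalue step is a mild variant of the paper's: you deduce $e_k(\lambda)\ge 0$ for every $k$ from the mixed intersection numbers $\alpha^k\cdot[\omega]^{n-k}[M]\ge 0$ and then exclude positive roots of $\prod_i(x+\lambda_i)$, whereas the paper evaluates $\int_M((1-t)\omega_0+t\eta)^n>0$ for $t\in[0,1)$ and observes that a negative eigenvalue would make a factor of $\prod_i(1-t+ta_i)$ vanish at some $t_0\in(0,1)$; the two arguments are essentially equivalent. As for what each global route buys: the Monge--Amp\`ere approach additionally produces a solution of the degenerate equation \eqref{eq-WYZ}, which has independent interest, while your (and the paper's) harmonic-form approach is shorter, avoids PDE estimates entirely, and gives the sharper conclusion that the nonnegative representative can be taken harmonic and parallel --- the fact that powers the paper's de Rham splitting corollary.
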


In particular, they prove that: if $\omega_0$ is the \K form of $(M,g)$ and $\Phi\in H^{(1,1)}(M)$ is real and satisfies $[\omega_0+t\Phi]\in \mathcal{K}$ for $0\le t<1$ and
$$\int_M(\omega_0+\Phi)^n=0,
$$
then there exists a smooth solution $v$ to \begin{equation}\label{eq-WYZ}
\begin{cases}
(\omega_0+\Phi+dd^cv)^n&=0,\\
\omega_0+\Phi+dd^cv&\ge0
\end{cases}
\end{equation}
 on $M$.  Note that $[\omega_0+t\Phi]\in\mathcal{K}$ for $0\le t<1$ iff $[\omega_0+ \Phi]$ is in the closure of $\mathcal{K}$.   Thus the solvability of \eqref{eq-WYZ} for any $\Phi$ above is equivalent to the statement of Theorem \ref{WYZ} for boundary classes $\alpha$ satisfying $\alpha^n[M] =0$.  Their proof is to consider smooth solutions $v_t$ to

\begin{equation}\label{eq-WYZ2}
\begin{cases}
(\omega_0+t\Phi+dd^c v_t)^n&=\gamma(t) \omega_0^n,\\
\omega_0+t\Phi+dd^c v_t&>0
\end{cases}
\end{equation}
for each $0\le t<1$ where $\gamma(t)$ is the normalizing factor
$$
\gamma(t)=\frac1{V(M,g)}\int_M(\omega_0+t\Phi)^n,
$$
and the existence of each $v_t$ is guaranteed by the results of Yau \cite{Y}.   The solution to \eqref{eq-WYZ} is then obtained by letting $t\to 1$.  In fact, under the curvature assumptions in Theorem \ref{t1} they show that $v_t=tv$ where $v$ is a fixed function independent of $t$ and hence solves \eqref{eq-WYZ} by letting $t\to 1$ in \eqref{eq-WYZ2}.  We are now ready to present:

\begin{proof}[First proof of Theorem \ref{t1}]
Let $v_t=tv$ be as above.  We begin by showing that $v$ satisfies a rather special property.  Since
$$
(\omega_0+t(\Phi+dd^c v))^n=\gamma(t)\omega_0^n
$$
 for all $0\le t<1$ by \eqref{eq-WYZ2}, if we let $a_1(x), \dots,a_n(x)$ be the ordered eigenvalues of $\Phi+dd^cv$ at $x\in M$ (with respect to $\omega_0$) we obtain
 $$
 \prod_{i=1}^n(1+ta_i(x))=\gamma(t)
 $$
 for all $0\le t<1$. Since the RHS does not depend on $x$, it is not hard to show that the $a_i(x)'s$ are constant functions on $M$ for each $i$, in other words the eigenvalues of $\Phi+dd^c v$ with respect to $\omega_0$ must be the same at each point on $M$. In particular, the trace of $\omega_0+\Phi+dd^cv$ is constant.  Suppose now that $\Phi$ had been chosen as the unique harmonic representative in $[\Phi]$.  Thus $\omega_0+\Phi$ is also harmonic and as pointed out in \cite{WuYauZheng}, it follows by the curvature assumption in \eqref{e1} that $\omega_0+\Phi$ must also be parallel.  In particular, the trace of $\omega_0+\Phi$ is constant and hence the trace of $dd^cv$ is constant as well.  Since $M$ is compact, it follows that $v$ must also be   constant on $M$.   Equivalently, we can summarize this as: in general (without assuming $\Phi$ is harmonic), \eqref{eq-WYZ} is always satisfied by any $v$ such that $\omega_0+\Phi+dd^c v$ is harmonic.  In other words, we have shown that the conclusion of Theorem \ref{WYZ} is always satisfied by the harmonic representative of any boundary class, and thus we have proved  the first  statement in Theorem \ref{t1}. The proof of the second statement of Theorem \ref{t1} is the same as  that in the Second proof of Theorem \ref{t1} below. \end{proof}
\begin{rem}
It has been known for some time that under the stronger assumption of nonnegative holomorphic bisectional curvature a harmonic $(1,1)$ form must be parallel  (see for example  \cite{GK, HowardSmythWu1981} and references therein).  This fact played a key role in the classification results for nonnegatively curved \K manifolds in \cite{HowardSmythWu1981} for example.  The proof of parallelism uses the Bochner formula for $(1,1)$ forms on \K manifolds and generalizes immediately to the curvature condition \eqref{e1}.  \end{rem}

One may ask if it can directly be proved that the harmonic representative of a boundary class above is actually nonnegative.  We present this in the following.

\begin{proof} [Second proof of Theorem \ref{t1}]  
 Let $\eta$ be as in Theorem \ref{t1} and let $\omega_0$ be the \K form for $(M, g)$.  By the above remarks, $\eta$ is parallel and thus has constant real eigenvalues $a_1,...,a_n$ on $M$ with respect to $\omega_0$.  Also, $[(1-t)\omega_0+t\eta]\in \mathcal{K}$  for every $t\in[0,1)$.

  In other words, for each $t\in[0,1)$ there exists $f_t\in C^{\infty}(M)$ and $\omega_t \in \mathcal{K}$ such that $(1-t)\omega_0+t\eta=\omega_t +dd^c f_t$, giving
$$
Vol_g (M)\prod_{i=1}^n(1-t+ta_i)= \int_M((1-t)\omega_0+t\eta)^n=\int_M\lf(\omega_{t}+dd^c f_{t}\ri)^n>0
$$
for all $t\in[0,1)$.  On the other hand, if $a_k<0$ for some $k$ then $1-t+ta_k$ and thus the product on the LHS above would vanish for some $t_0\in(0,1)$ giving a contradiction. Thus $a_i$ must be nonnegative for each $i$, in other words $\eta$ is nonnegative.  In particular, we have $\int_M \eta^n \geq  0$ with strict inequality if and only if $\eta$  is positive.  This completes the proof.
\end{proof}

The fact that the harmonic form $\eta$ is parallel allows for a  corresponding decomposition of the universal cover $\wt M$ of $M$ by the de Rham decomposition Theorem for \K manifolds. This in turn will allow a further description of the boundary of $\mathcal{K}$.  Let $\wt M$ be the universal cover of $M$ with projection $\pi:\wt M\to M$.  By the de Rham decomposition Theorem for \K manifolds, we may write $$(\wt M, \wt \omega_0)=(\wt M_0, \wt \sigma_0)\times (\wt M_1, \wt \sigma_1)\times\cdots\times (\wt M_k , \wt \sigma_k)$$ where $\wt \omega_0=\pi^*(\omega_0)$, each factor on the RHS is irreducible and \K and the decomposition is unique up to permutation.  In the following we will identify $\pi_1(M)$, the first fundamental group of $M$, with the corresponding group of deck transformations of $\wt{M}$.

 \begin{cor}
With the above notations, the boundary of $\mathcal{K}$ can be identified with the space of harmonic $(1,1)$ forms $\wt \eta$ on $\wt M$ satisfying: $\wt \eta$ is equivariant with respect to $\pi_1(M)$ and $\tilde\eta=\prod_{i=1}^k a_i\wt \sigma_i$, where $a_i\ge0$ for all $i$ with equality holding for some $i$.
\end{cor}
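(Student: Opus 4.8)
The plan is to prove the corollary by establishing a bijection between boundary classes of $\mathcal{K}$ and the described space of equivariant harmonic forms on $\wt M$, using the parallelism established in Theorem \ref{t1} together with the de Rham decomposition. First I would set up the correspondence: given a boundary class $\alpha$, Theorem \ref{t1} provides its unique harmonic representative $\eta$, which is parallel and nonnegative, and I would lift it to $\wt\eta = \pi^*(\eta)$ on $\wt M$. Since $\eta$ descends from $\wt M$ to $M$, the lift $\wt\eta$ is automatically invariant (equivariant) under the deck transformation group $\pi_1(M)$, and since $\eta$ is parallel, $\wt\eta$ is a parallel harmonic $(1,1)$-form on the simply connected \K manifold $\wt M$.

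The key step is to decompose a parallel $(1,1)$-form on a de Rham product. The plan is to invoke the structure theory: on $(\wt M, \wt\omega_0) = \prod_{i=0}^k (\wt M_i, \wt\sigma_i)$ with the $\wt M_i$ irreducible \K factors (and $\wt M_0$ the flat Euclidean factor), the holonomy group acts on each factor, and a parallel form must be invariant under the full holonomy. On an irreducible factor of complex dimension $\ge 1$ that is not flat, the holonomy acts irreducibly on the $(1,1)$-forms in such a way that the only parallel $(1,1)$-forms are constant multiples of the \K form $\wt\sigma_i$; there are no parallel cross-terms between distinct irreducible factors because holonomy acts irreducibly within each factor. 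This forces $\wt\eta = \sum_{i} a_i \wt\sigma_i$ for real constants $a_i$, and after absorbing the flat factor appropriately I would write this as $\wt\eta = \prod_{i=1}^k a_i \wt\sigma_i$ in the notation of the corollary (here the product notation denotes the form that is $a_i \wt\sigma_i$ on the $i$-th factor and extended by the product structure). The nonnegativity of $\eta$ from Theorem \ref{t1} translates directly into $a_i \ge 0$ for all $i$, and the fact that $\alpha$ lies on the \emph{boundary} rather than the interior of $\mathcal{K}$ means $\eta$ cannot be strictly positive; by the last statement of Theorem \ref{t1} this is equivalent to $\alpha^n[M] = 0$, which forces $\prod_i a_i^{\dim \wt M_i} = 0$ and hence $a_i = 0$ for at least one $i$.

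Conversely, I would check that any equivariant harmonic form of the stated shape descends to a harmonic $(1,1)$-form $\eta$ on $M$ whose class $\alpha = [\eta]$ is a boundary class: nonnegativity of the $a_i$ gives $\eta \ge 0$, so $\alpha$ lies in the closure $\overline{\mathcal{K}}$, while the vanishing of some $a_i$ gives $\alpha^n[M] = 0$, placing $\alpha$ on the boundary rather than the interior. Since harmonic representatives are unique within a cohomology class, this correspondence $\alpha \leftrightarrow \wt\eta$ is well defined and injective in both directions, giving the desired identification.

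The main obstacle I anticipate is the rigorous classification of parallel $(1,1)$-forms under the de Rham decomposition, specifically justifying that each irreducible non-flat factor admits only multiples of its own \K form as parallel $(1,1)$-forms and that no mixed terms survive. This is a holonomy-theoretic statement that I would handle by appealing to the de Rham decomposition theorem for \K manifolds together with the fact (already noted in the remark following the first proof) that a parallel form is determined by its value at a single point and must be holonomy-invariant; Schur-type irreducibility of the holonomy representation on each factor then pins down the form. The bookkeeping of the flat factor $\wt M_0$ (where parallel $(1,1)$-forms need not be a single multiple of $\wt\sigma_0$) requires a little care, but since the corollary is stated with the product running from $i=1$ to $k$, I would treat the Euclidean factor separately or assume it is absorbed into the indexing as the authors intend.
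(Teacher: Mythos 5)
Your proposal is correct, and its outer scaffolding (lift the harmonic representative, use the parallelism from Theorem \ref{t1}, descend in the converse direction, and detect the boundary via $\alpha^n[M]=0$) matches the paper's proof; however, the central classification step is carried out by a genuinely different argument. You classify parallel $(1,1)$-forms by holonomy invariance: Schur's lemma forces a real constant multiple of $\wt\sigma_i$ on each irreducible factor, and cross terms between factors die because a mixed invariant pairing would produce a holonomy-fixed vector in an irreducible factor of real dimension at least two, which cannot exist. The paper instead applies the de Rham decomposition theorem a \emph{second} time, to $\wt\eta$ itself: since $\wt\eta$ is parallel with constant eigenvalues, its eigendistributions are parallel, so $\wt M$ splits as $\wt N_0\times\wt N_1\times\cdots\times\wt N_l$ with $\wt\eta$ vanishing on $\wt N_0$ and equal to a positive constant multiple of the K\"ahler form on each $\wt N_i$, $i\ge 1$; it then invokes uniqueness of the de Rham decomposition to match this adapted splitting, after further refinement into irreducible pieces, with the fixed splitting $\wt M_0\times\cdots\times\wt M_k$. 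Your route buys an explicit linear-algebraic proof of the two facts the paper leaves implicit (only multiples of $\wt\sigma_i$ survive on an irreducible factor, and no mixed terms occur), at the cost of being longer; the paper's route is shorter because the matching is absorbed into the uniqueness statement. Your caveat about the flat factor is also well taken: on $\wt M_0$ a parallel $(1,1)$-form need only be a constant-coefficient nonnegative form, diagonalizable after splitting $\wt M_0$ into lines, and the paper's own proof glosses over exactly this point in the phrase ``by further decomposing $N_i$ into irreducible factors,'' so treating the Euclidean factor separately, as you propose, is if anything more careful than the original.
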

\begin{proof} Let $\wt \eta$ be a harmonic form on $\wt M$ as above.  Then $\wt \eta$ descends to a harmonic form $\eta$ and it is easy to see that $[\eta]$ is in the boundary of $\mathcal{K}$. Note that the map $\wt \eta \to [\eta]$ is one-one.

On the other hand, if $\a$ is in the boundary of $\mathcal{K}$ and $\eta$ is the unique harmonic representative in $\alpha$, then $\eta$ is nonnegative by Theorem \ref{t1} and also parallel. Hence the eigenvalues of $\eta$ are nonnegative constants.
Thus $\wt \eta=\pi^*(\eta)$ is likewise harmonic with nonnegative constants.   By the de Rham decomposition theorem for \K manifolds, $\wt M$ splits into a product $\wt N_0\times\wt N_1\times\cdots\times \wt N_l$ such that $\tilde\eta$ splits accordingly as $\wt \eta_0\times\wt \eta_1\times\cdots\times \wt \eta_l$ where $\wt \eta_0$ is the zero form on $\wt N_0$ and $\wt \eta_i$ is a positive multiple of the \K form on each $\wt N_i$, $1\le i\le l$. By the uniqueness of de Rham decomposition, by further decomposing $N_i$ into irreducible factors one can see that $\tilde \eta$ is a harmonic form on $\wt M$ as in the theorem.

\end{proof}
\bibliographystyle{amsplain}

\end{document}